\newtheorem{theorem}{Theorem}
\newtheorem{example}[theorem]{Example}
\newtheorem{remark}[theorem]{Remark}
\newtheorem{lemma}[theorem]{Lemma}
\newtheorem{proposition}[theorem]{Proposition}
\newtheorem{corollary}[theorem]{Corollary}
\numberwithin{equation}{section}
\begin{document}

\title[Weight modules over  Weyl algebras]{Weight modules over infinite dimensional Weyl algebras}
\author[V.~Futorny, D.~Grantcharov and V.~Mazorchuk]{Vyacheslav Futorny, Dimitar Grantcharov\\ and Volodymyr Mazorchuk}

\begin{abstract}
We classify simple weight modules over infinite dimensional Weyl algebras and realize them using the action on 
certain localizations of the polynomial ring. We describe indecomposable projective and injective weight modules 
and deduce from this a description of blocks of the category of weight modules by quivers and relations. As a 
corollary we establish Koszulity for all blocks. \\

\noindent 2000 MSC: 17B10, 17B65, 16D60
\end{abstract}
\maketitle

\section{Introduction}\label{s0}

Weyl algebras are classical objects of study in representation theory that arise naturally both in mathematics 
and physics and have many important applications. For example, an essential part of the ${\mathcal D}$-module
approach to the representation theory of a finite dimensional simple Lie algebra $\mathfrak{g}$ is existence 
of a natural homomorphism from $U(\mathfrak{g})$ to a finite dimensional (or finite rank) Weyl algebra $A_n$. In the case of an affine Lie 
algebra there is a similar homomorphism, but now to an infinite dimensional Weyl algebra ${ A}_{\infty}$. 
A realization of the standard Verma $\widehat{\mathfrak{sl}}_2$-modules as ${ A}_{\infty}$-modules was
given by Wakimoto in \cite{W} and was later generalized to other types of Verma modules and higher rank 
affine Lie algebras, see \cite{C,CF,FF1,FF2,JK,V}.

The algebra ${ A}_{\infty}$ can be viewed as an infinite rank generalized Weyl algebra
(in the sense of \cite{Ba,BB}) and thus has a natural category of representations consisting of the
so-called {\em weight} modules. Such modules over finite and infinite dimensional Weyl algebras have 
been extensively studied in the last 20 years. Various constructions and classification results appear  
in \cite{BB,BBF,GS}. In particular, a partial classification of simple weight 
${ A}_{\infty}$-modules was given in \cite{BBF}. On the other hand, new examples of 
simple weight ${ A}_{\infty}$-modules recently appeared in \cite{MZ}.

In the present paper we complete classification of simple weight modules over infinite dimensional Weyl algebras.
The Weyl algebra is usually defined as the algebra of certain differential operators of a polynomial algebra.
This action localizes to Laurent polynomials and then can be twisted to all polynomials with not
necessarily integer exponents. We use the latter action both for our main classification result,
Theorem~\ref{thm3}, and also to give an explicit realization of all simple weight modules in Subsection~\ref{s1.5}.
This explicit realization is helpful for numerous reasons. Firstly, it can be used to construct new  free field
representations of affine Lie algebras and direct limit Lie algebras. Also, one would now expect realizations
of these representations  in terms of sections of vector bundles on flag varieties. Furthermore, this
realization allows us to describe indecomposable projective and injective weight modules, see
Subsections~\ref{s1.4},\ref{s1.45} and \ref{s1.9} which, in turn, leads to an explicit description,
via quivers and relations, for all blocks of the category of weight modules. All details of the latter 
are collected in Section~\ref{s2} with the main result being Theorem~\ref{thm27}. As a  
consequence of this description we show that all blocks correspond to Koszul algebras, see Proposition~\ref{prop21}. 
\vspace{1mm}

\noindent
{\bf Acknowledgements.} D.G. gratefully acknowledges the
hospitality and excellent working conditions at the S\~ao Paulo
University where most of this work was completed. V.F. was
supported by the CNPq grant (301743/2007-0) and by the
Fapesp grant (2010/50347-9). D.G was supported by the
Fapesp grant (2011/21621-8) and by the NSA grant
H98230-10-1-0207. V.M. was supported by the Royal Swedish Academy of Sciences and by the Swedish Research Council.

\section{Generalities}\label{s1}

\subsection{The infinite dimensional Weyl algebra}\label{s1.1}

Let $\Bbbk$ be an algebraically closed field of 
characteristic zero and $\mathtt{I}$ be an infinite set
satisfying $|I|<|\Bbbk|$ (this assumption will be essential,
in particular, for Lemma~\ref{lem1}). Consider the commutative
$\Bbbk$-algebra $B$ of polynomials in infinitely many
variables $x_i$, $i\in \mathtt{I}$. For $i\in \mathtt{I}$
let $X_i:=x_i\cdot$ be the linear operator on $B$ given by 
multiplication with $x_i$. Let further $Y_i:=\partial_i$
denote the linear operator on $B$ given by 
the partial derivative with respect to $x_i$, that is
$\partial_i$ is a derivation of $B$ defined by
$\partial_i\cdot x_j=\delta_{i,j}$,
the Kronecker delta, on the generators.

Consider the additive abelian group 
$\Bbbk^{\mathtt{I}}$ consisting of all
vectors $\mathbf{v}=(v_i)_{i\in \mathtt{I}}$ 
with coefficients from $\Bbbk$.
Let $\mathbb{Z}_f^{\mathtt{I}}$ denote the 
subgroup of all integral vectors with at most finitely many
nonzero coefficients.
For $\mathbf{v}\in \mathbb{Z}_f^{\mathtt{I}}$ we denote by 
$\mathbf{x}^{\mathbf{v}}$ the monomial 
$\prod_{i\in \mathtt{I}}x_i^{v_i}\in B$.
The monomials $\{\mathbf{x}^{\mathbf{v}}:\mathbf{v}\in \mathbb{Z}_f^{\mathtt{I}}\}$
form the {\em standard basis} in $B$ (over $\Bbbk$).

The {\em infinite Weyl algebra} $A=A_{\Bbbk,I}$ is the
subalgebra in $\mathrm{End}_{\Bbbk}(B)$ generated by 
all $X_i$ and $Y_i$, $i\in \mathtt{I}$. It is easy to check
that these generators satisfy the following relations:
\begin{displaymath}
\begin{array}{cccc}
X_iX_j=X_jX_i,& \forall i,j;& 
Y_iY_j=Y_jY_i,& \forall i,j;\\ 
X_iY_j=Y_jX_i,& i\neq j;& 
X_iY_i=Y_iX_i-\mathrm{Id},& \forall i, 
\end{array}
\end{displaymath}
where $\mathrm{Id}$ denotes the identity linear 
transformation.
It is easy to check that these relations give
a presentation for $A$.

\subsection{A maximal commutative subalgebra}\label{s1.2}

For $i\in \mathtt{I}$ set $t_i:=X_iY_i\in A$ and denote
by $A_0$ the subalgebra of $A$ generated by all  $t_i$'s.
From the presentation of $A$ it follows that $A_0$ is 
a commutative subalgebra of $A$ and that the generators
$t_i$ are algebraically independent. 

For $\mathbf{v}\in \mathbb{Z}_f^{\mathtt{I}}$ consider the
element 
\begin{displaymath}
\mathbf{X}_{\mathbf{v}}:=
\prod_{i:v_i>0}X_i^{v_i}\prod_{i:v_i<0}Y_i^{-v_i}. 
\end{displaymath}
From the presentation of $A$ it follows that $A$
is free both as a left and as a right $A_0$-module
with basis $\{\mathbf{X}_{\mathbf{v}}:\mathbf{v}\in \mathbb{Z}_f^{\mathtt{I}}\}$.
Since none of $\mathbf{X}_{\mathbf{v}}$, $\mathbf{v}\neq 0$,
commutes with all elements of $A_0$, it follows that
$A_0$ is a maximal commutative subalgebra in $A$.

For $\mathbf{p}\in \Bbbk^{\mathtt{I}}$ denote by 
$\mathfrak{m}_{\mathbf{p}}$ the maximal ideal in
$A_0$ generated by $t_i-p_i$, $i\in \mathtt{I}$.

\begin{lemma}\label{lem1}
Every maximal ideal in $A_0$ has the form 
$\mathfrak{m}_{\mathbf{p}}$ for some 
$\mathbf{p}\in \Bbbk^{\mathtt{I}}$.
\end{lemma}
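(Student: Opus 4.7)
The plan is to adapt the standard weak Nullstellensatz argument from the finitely generated setting, using the cardinality hypothesis $|\mathtt{I}|<|\Bbbk|$ as a substitute for finite generation. Let $\mathfrak{m}$ be an arbitrary maximal ideal in $A_0$ and set $K:=A_0/\mathfrak{m}$, which is a field extension of $\Bbbk$. It suffices to show that the image $\overline{t}_i$ of each $t_i$ in $K$ is algebraic over $\Bbbk$; since $\Bbbk$ is algebraically closed, one then has $\overline{t}_i=p_i$ for some $p_i\in\Bbbk$, so that $\mathfrak{m}$ contains (and therefore equals, by maximality of $\mathfrak{m}_{\mathbf{p}}$) the ideal $\mathfrak{m}_{\mathbf{p}}$ for $\mathbf{p}=(p_i)_{i\in\mathtt{I}}$.

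To prove the algebraicity claim, I would compare two cardinal invariants of $K$. On one hand, $K$ is a quotient of $A_0$, which has a $\Bbbk$-basis consisting of monomials in the $t_i$; the set of such monomials is in bijection with finite multisets on $\mathtt{I}$ and thus has cardinality $\max(|\mathtt{I}|,\aleph_0)=|\mathtt{I}|$. Consequently
\begin{displaymath}
\dim_{\Bbbk} K\ \le\ |\mathtt{I}|\ <\ |\Bbbk|.
\end{displaymath}
On the other hand, if some $\overline{t}_i$ were transcendental over $\Bbbk$, then the family $\{(\overline{t}_i-c)^{-1} : c\in\Bbbk\}$ would live in $K$ and would be $\Bbbk$-linearly independent by the usual partial fractions argument (a nontrivial linear relation, cleared of denominators, would produce a nonzero polynomial in $\overline{t}_i$ of degree less than $|\Bbbk|$ with $|\Bbbk|$-many roots). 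This would force $\dim_{\Bbbk} K\ge|\Bbbk|$, contradicting the bound above.

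The only genuinely delicate point is the cardinality bookkeeping: one must verify that the set of monomials in the $t_i$ really has cardinality at most $|\mathtt{I}|$ (each monomial is determined by a function $\mathtt{I}\to\mathbb{Z}_{\ge 0}$ of finite support, and the set of such functions has size $\max(|\mathtt{I}|,\aleph_0)=|\mathtt{I}|$ because $\mathtt{I}$ is infinite), and that the partial-fractions independence argument goes through in arbitrary characteristic zero, which it does because the elements $(\overline{t}_i-c)^{-1}$ have distinct poles. Everything else is a direct application of the hypothesis that $\Bbbk$ is algebraically closed together with maximality of $\mathfrak{m}_{\mathbf{p}}$, so I anticipate no further obstacles.
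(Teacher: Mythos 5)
Your proof is correct and follows essentially the same route as the paper: both rest on the observation that if some residue $z=\overline{t}_i$ were transcendental over $\Bbbk$, the elements $(z-c)^{-1}$, $c\in\Bbbk$, would be $\Bbbk$-linearly independent in $A_0/\mathfrak{m}$, contradicting $\dim_{\Bbbk}A_0=|\mathtt{I}|<|\Bbbk|$. (Only your parenthetical justification of that independence is slightly garbled --- a finite relation, cleared of denominators, exhibits $\overline{t}_i$ as a root of a nonzero polynomial of \emph{finite} degree, contradicting transcendence; there is no issue of ``$|\Bbbk|$-many roots'' --- but the standard distinct-poles argument you invoke is exactly right.)
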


\begin{proof}
Let $\mathfrak{m}$ be a maximal ideal in $A_0$. Then
$A_0/\mathfrak{m}$ is a field extending $\Bbbk$. As
$\Bbbk$ is algebraically closed, we either have that the
extension $\Bbbk\hookrightarrow A_0/\mathfrak{m}$ is
an isomorphism, in which case $\mathfrak{m}=\mathfrak{m}_{\mathbf{p}}$ 
for some  $\mathbf{p}\in \Bbbk^{\mathtt{I}}$, or this extension
is purely transcendental. In the latter case take any 
$z\in (A_0/\mathfrak{m})\setminus \Bbbk$. Then the
elements $(z-c)^{-1}$, $c\in\Bbbk$, are linearly
independent over $\Bbbk$ which means that the 
$\Bbbk$-dimension of $A_0/\mathfrak{m}$ is at least 
$|\Bbbk|$. On the other hand, the $\Bbbk$-dimension 
of $A_0$ is $|I|<|\Bbbk|$, a contradiction. The claim follows.
\end{proof}

\section{Weight modules}\label{s2}

\subsection{Weight $A$-modules}\label{s1.3}

Let $M$ be an $A$-module and $\mathfrak{m}$ be a
maximal ideal in $A_0$. An element $x\in M$ is called
{\em weight}  if 
$\mathfrak{m}\cdot x=0$. The module $M$ is called 
a {\em weight} module if it has a basis consisting of
weight elements. By Lemma~\ref{lem1}, the letter is
equivalent to 
$\displaystyle M=\bigoplus_{\mathbf{p}\in \Bbbk^{\mathtt{I}}}M_{\mathbf{p}}$,
where
\begin{displaymath}
M_{\mathbf{p}}:=\{x\in M:t_i\cdot x=p_i x, \forall
i\in I\} 
\end{displaymath}
is the {\em weight space} of {\em weight} $\mathbf{p}$.
For a weight module $M$ the set of all 
$\mathbf{p}\in \Bbbk^{\mathtt{I}}$ such that 
$M_{\mathbf{p}}\neq 0$ is called the {\em support}
of $M$ and denoted $\mathrm{supp}(M)$.

Denote by $\mathfrak{W}$ the full subcategory in
$A\text{-}\mathrm{mod}$, the category of all $A$-mo\-dules, consisting of all weight $A$-modules. For
$\mathbf{p}\in \Bbbk^{\mathtt{I}}$ we denote by
$\mathfrak{W}_{\mathbf{p}}$ the full subcategory of 
$\mathfrak{W}$ consisting of all $M$ satisfying
$\mathrm{supp}(M)\subset \mathbf{p}+ \mathbb{Z}_f^{\mathtt{I}}$. Clearly,
$\mathfrak{W}_{\mathbf{p}}=\mathfrak{W}_{\mathbf{m}}$
if and only if $\mathbf{m}\in \mathbf{p}+ \mathbb{Z}_f^{\mathtt{I}}$.
Further, we have the usual decomposition
\begin{displaymath}
\mathfrak{W}\cong \bigoplus_{\xi\in \Bbbk^{\mathtt{I}}/\mathbb{Z}_f^{\mathtt{I}}}
\mathfrak{W}_{\mathbf{p}_{\xi}},
\end{displaymath}
where $\mathbf{p}_{\xi}\in \xi$ is some fixed representative.

\subsection{Projective weight $A$-modules}\label{s1.4}

For $\mathbf{p}\in \Bbbk^{\mathtt{I}}$
denote by $\mathfrak{V}_{\mathbf{p}}$ the full subcategory 
in $A_0\text{-}\mathrm{mod}$ consisting of all 
$A_0$-modules $N$ such that $\mathfrak{m}_{\mathbf{p}}\cdot N=0$. The category  $\mathfrak{V}_{\mathbf{p}}$ is semisimple
and has a unique (up to isomorphism) simple object
$\Bbbk_{\mathbf{p}}:=A_0/\mathfrak{m}_{\mathbf{p}}$
which is also projective.

The restriction functor $\mathrm{Res}_{\mathbf{p}}:
\mathfrak{W}\to \mathfrak{V}_{\mathbf{p}}$ is obviously
exact and is right adjoint to the induction functor
\begin{displaymath}
\mathrm{Ind}_{\mathbf{p}}:= A\otimes_{A_0}{}_-: 
\mathfrak{V}_{\mathbf{p}}\to \mathfrak{W}. 
\end{displaymath}
The functor $\mathrm{Ind}_{\mathbf{p}}$  is exact as $A$ is free over $A_0$,
moreover, being left adjoint to an exact functor,
the functor $\mathrm{Ind}_{\mathbf{p}}$ maps
projective objects to projective objects. Therefore
$P(\mathbf{p}):=\mathrm{Ind}_{\mathbf{p}}\Bbbk_{\mathbf{p}}\cong A/A\mathfrak{m}_{\mathbf{p}}$
is projective in $\mathfrak{W}$. Moreover, by 
adjunction, for any $M\in \mathfrak{W}$ we have
a natural isomorphism
\begin{equation}\label{eq37}
\mathrm{Hom}_{\mathfrak{W}}(P(\mathbf{p}),M)
\cong M_{\mathbf{p}}.
\end{equation}

Denote by $\sigma_i$
the automorphism of $A_0$ given by 
\begin{displaymath}
\sigma_i(t_j)=\begin{cases}
t_j,&i\neq j;\\ t_i+1,& i=j.
\end{cases}
\end{displaymath}
Let $H$ be the group generated by $\sigma_i$, $i\in\mathtt{I}$. Identifying maximal ideals of
$A_0$ with elements in $\Bbbk^{\mathtt{I}}$ yields
a canonical identification $H\cong \mathbb{Z}_f^{\mathtt{I}}$. If $M$ is a weight
$A$-module, then the defining relations of $A$ imply
that for any $\mathbf{p}\in \Bbbk^{\mathtt{I}}$ 
we have $X_i M_{\mathbf{p}}\subset M_{\sigma_i(\mathbf{p})}$
and $Y_i M_{\mathbf{p}}\subset M_{\sigma_i^{-1}(\mathbf{p})}$.

The group $\mathbb{Z}_f^{\mathtt{I}}$ acts freely
on $\Bbbk^{\mathtt{I}}$. Moreover, elements of $\mathbb{Z}_f^{\mathtt{I}}$
naturally index the free basis of $A$ as a right $A_0$-module.
It follows that 
\begin{displaymath}
P(\mathbf{p})\cong\bigoplus_{\sigma\in \mathbb{Z}_f^{\mathtt{I}}}
P(\mathbf{p})_{\sigma(\mathbf{p})},
\end{displaymath}
that all $\sigma(\mathbf{p})$ are different and all
$P(\mathbf{p})_{\sigma(\mathbf{p})}$ are one-dimensional
over $\Bbbk$. 

\begin{proposition}\label{prop2}
For any $\mathbf{p}\in \Bbbk^{\mathtt{I}}$ we have:
\begin{enumerate}[$($a$)$]
\item\label{prop2.1} $P(\mathbf{p})$ is indecomposable
and has simple top $L(\mathbf{p})$,
\item\label{prop2.2} $\dim_{\Bbbk}L(\mathbf{p})_{\mathbf{p}}=1$ and 
$L(\mathbf{p})$ is the unique (up to isomorphism) 
simple module with this property.
\end{enumerate}
\end{proposition}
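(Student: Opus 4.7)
The plan is to handle (a) in two steps. First I would apply the adjunction isomorphism \eqref{eq37} with $M = P(\mathbf{p})$ and combine it with the fact, already established just before the proposition, that $\dim_\Bbbk P(\mathbf{p})_\mathbf{p} = 1$. This gives $\mathrm{End}_{\mathfrak{W}}(P(\mathbf{p})) \cong \Bbbk$; since $\Bbbk$ is a local ring, $P(\mathbf{p})$ is automatically indecomposable. Second, to extract a simple top I would exhibit a unique maximal submodule. Note that $P(\mathbf{p}) = A/A\mathfrak{m}_\mathbf{p}$ is generated as an $A$-module by the image $\overline{1}$ of $1 \in A$, a weight vector spanning the one-dimensional space $P(\mathbf{p})_\mathbf{p}$. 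Consequently any proper submodule $N$ of $P(\mathbf{p})$ must satisfy $N_\mathbf{p} = 0$, since otherwise $N$ would contain $\overline{1}$ and thus equal all of $P(\mathbf{p})$.

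To promote this observation into a uniqueness statement I would first verify the standard fact that every submodule of a weight module is itself weight-graded. For any $n = n_{\mathbf{q}_1} + \cdots + n_{\mathbf{q}_k}$ in a submodule $N$ with finitely many nonzero weight components, one selects finitely many coordinates $i$ whose $t_i$-eigenvalues separate the weights $\mathbf{q}_j$ and applies Vandermonde-type polynomials in those $t_i$ to recover each $n_{\mathbf{q}_j}$ inside $A_0 \cdot n \subset N$. Granting this, the sum $R$ of all proper submodules of $P(\mathbf{p})$ is again graded and satisfies $R_\mathbf{p} = \sum_{N} N_\mathbf{p} = 0$, so $R$ is itself proper and is therefore the unique maximal submodule. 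The quotient $L(\mathbf{p}) := P(\mathbf{p})/R$ is the desired simple top, completing (a).

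For (b), taking the $\mathbf{p}$-weight component in the short exact sequence $0 \to R \to P(\mathbf{p}) \to L(\mathbf{p}) \to 0$ immediately gives $\dim_\Bbbk L(\mathbf{p})_\mathbf{p} = 1$. For uniqueness, let $L$ be any simple weight module with $L_\mathbf{p} \neq 0$; the adjunction \eqref{eq37} produces a nonzero homomorphism $\varphi : P(\mathbf{p}) \to L$, which is surjective by simplicity of $L$, and $\ker \varphi$ is a maximal submodule of $P(\mathbf{p})$, hence coincides with $R$, so $L \cong L(\mathbf{p})$. The main obstacle I anticipate is the grading claim for submodules in the infinite-rank setting; however, since every element has only finitely many nonzero weight components, the verification reduces to a finite-rank Vandermonde argument carried out one coordinate at a time.
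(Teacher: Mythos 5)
Your proof is correct and follows essentially the same route as the paper: the adjunction \eqref{eq37} gives $\mathrm{End}_{\mathfrak{W}}(P(\mathbf{p}))\cong\Bbbk$ for indecomposability, the sum of all submodules avoiding $P(\mathbf{p})_{\mathbf{p}}$ is the unique maximal submodule, and weight-space considerations yield part (b). The only difference is that you spell out two points the paper leaves implicit --- the Vandermonde/interpolation argument showing submodules of weight modules are weight-graded, and the explicit surjection $P(\mathbf{p})\to L$ in the uniqueness argument --- both of which are sound.
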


\begin{proof}
Using \eqref{eq37}, we have $\mathrm{End}_{\mathfrak{W}}(P(\mathbf{p}))\cong
P(\mathbf{p})_{\mathbf{p}}\cong \Bbbk$. This implies
claim \eqref{prop2.1}. As $P(\mathbf{p})$ is generated by
$P(\mathbf{p})_{\mathbf{p}}$, any proper submodule of
$P(\mathbf{p})$ does not intersect $P(\mathbf{p})_{\mathbf{p}}$. On the other hand,
let $N$ denote the sum of all
submodules of $P(\mathbf{p})$ which do not intersect
$P(\mathbf{p})_{\mathbf{p}}$. Then $N$ is a proper
submodule of $P(\mathbf{p})$ and hence is the unique
maximal submodule. Then $P(\mathbf{p})/N$ is the unique
simple quotient of $P(\mathbf{p})$
and  $(P(\mathbf{p})/N)_{\mathbf{p}}\neq 0$
implying claim 
\eqref{prop2.2}.
\end{proof}
 Note that $L(\mathbf{0})$ is isomorphic to the defining representation $B$ of $A$, where all component of $\mathbf{0}$ are zero.

\subsection{Involution}\label{s1.45}

Denote by $\mathfrak{W}^f$ the full subcategory of
$\mathfrak{W}$ consisting of all modules with finite
dimensional weight spaces. From the previous subsection
we have that both $P(\mathbf{p})$ and $L(\mathbf{p})$
belong to $\mathfrak{W}^f$ for all 
$\mathbf{p}\in \Bbbk^{\mathtt{I}}$. 

The algebra $A$ has the standard involutive anti-automorphism 
$\diamond$ defined on the generators via $X_i^{\diamond}=Y_i$.
Note that $\diamond$ fixes $A_0$ pointwise. The category 
$\mathfrak{W}^f$ has the standard restricted duality
$\star$ defined as follows: for $M\in \mathfrak{W}^f$
we have $\displaystyle M^{\star}:=
\bigoplus_{\mathbf{p}\in \Bbbk^{\mathtt{I}}}
\mathrm{Hom}_{\Bbbk}(M_{\mathbf{p}},\Bbbk)$
with the left action of $A$ defined via $\diamond$,
and with the obvious action on morphisms. As
$\diamond$ fixes $A_0$ pointwise, we have
$\mathrm{supp}(M^{\star})=\mathrm{supp}(M)$ which
implies that $L(\mathbf{p})^{\star}\cong 
L(\mathbf{p})$. It follows that 
$P(\mathbf{p})^{\star}$ is the injective envelope of
$L(\mathbf{p})$.

\subsection{Realization via polynomial action}\label{s1.5}

The defining action of $A$ on $B$ admits the 
following obvious generalization. For
$\mathbf{p}\in \Bbbk^{\mathtt{I}}$ set
$\mathbf{x}^{\mathbf{p}}:=\prod_{i\in\mathtt{I}}x_i^{p_i}$
and let $B(\mathbf{p})$ be the linear span of 
$\mathbf{x}^{\mathbf{m}}$, $\mathbf{m}\in \mathbf{p}+\mathbb{Z}_f^{\mathtt{I}}$.
Define the action of $X_i$ on $B(\mathbf{p})$ by multiplication with $x_i$ and the action of $Y_i$ on $B(\mathbf{p})$ by partial derivative with respect to $x_i$.
It is straightforward to check that this defines on
$B(\mathbf{p})$ the structure of a weight $A$-module,
that $\mathrm{supp}(B(\mathbf{p}))=\mathbf{p}+\mathbb{Z}_f^{\mathtt{I}}$
and that all nonzero weight spaces of $B(\mathbf{p})$
are one-dimensional. 

\begin{proposition}\label{prop31}
\begin{enumerate}[$($a$)$]
\item\label{prop31.1} Let $\mathbf{p}\in \Bbbk^{\mathtt{I}}$ be such that $p_i\in\mathbb{Z}$ implies 
$p_i\in\{0,1,2,\dots\}$ for all $i$. Then $B(\mathbf{p})$ and $B(\mathbf{p})^{\star}$ are the injective envelope 
and the projective cover of $L(\mathbf{p})$, respectively.
\item\label{prop31.2} Let $\mathbf{p}\in \Bbbk^{\mathtt{I}}$ be such that $p_i\in\mathbb{Z}$ implies 
$p_i\in\{-1,-2,\dots\}$ for all $i$. Then $B(\mathbf{p})$ and $B(\mathbf{p})^{\star}$ are the projective cover 
and the injective envelope of $L(\mathbf{p})$, respectively.
\end{enumerate}
\end{proposition}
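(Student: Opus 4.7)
The plan is to identify, in case (b), $B(\mathbf{p})$ with the projective cover $P(\mathbf{p})$ and, in case (a), $B(\mathbf{p})^\star$ with $P(\mathbf{p})$; the remaining injective-envelope assertions then follow by one application of the duality $\star$ from Subsection~\ref{s1.45}, since $\star$ interchanges projective covers and injective envelopes of self-dual simples and $B(\mathbf{p})\in\mathfrak{W}^f$. The key preliminary observation is that both $P(\mathbf{p})$ and $B(\mathbf{p})$ have one-dimensional weight spaces on the same support $\mathbf{p}+\mathbb{Z}_f^{\mathtt{I}}$, and the same holds for $B(\mathbf{p})^\star$ because $\diamond$ fixes $A_0$ pointwise. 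Consequently any surjective morphism between two such modules is automatically an isomorphism, and by the adjunction~\eqref{eq37} producing a surjection from $P(\mathbf{p})$ reduces to checking that the canonical weight $\mathbf{p}$ vector generates the target.

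Case (b) is the more direct of the two. The formulas $X_i\cdot\mathbf{x}^{\mathbf{m}}=\mathbf{x}^{\mathbf{m}+e_i}$ and $Y_i\cdot\mathbf{x}^{\mathbf{m}}=m_i\mathbf{x}^{\mathbf{m}-e_i}$ show that $X_i$ is invertible on every weight vector and that $Y_i$ has a zero coefficient exactly when the $i$-th coordinate of the current weight is $0$. To transport $\mathbf{x}^{\mathbf{p}}$ to an arbitrary $\mathbf{x}^{\mathbf{m}}$ with $\mathbf{m}\in\mathbf{p}+\mathbb{Z}_f^{\mathtt{I}}$ I first apply $Y_i$'s to lower every coordinate that needs lowering and then $X_i$'s to raise those that need raising. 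Under the hypothesis of (b) every descent starts at an integer $p_i<0$ and stays in the negative integers, so no bad coefficient is ever encountered; the ascents are automatic. This yields $A\cdot\mathbf{x}^{\mathbf{p}}=B(\mathbf{p})$, hence $B(\mathbf{p})\cong P(\mathbf{p})$, and dualizing gives $B(\mathbf{p})^\star\cong P(\mathbf{p})^\star$ as the injective envelope of $L(\mathbf{p})$.

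Case (a) is handled by the mirror-image argument on $B(\mathbf{p})^\star$. A direct computation of the $\diamond$-twisted action on the dual basis $\{(\mathbf{x}^{\mathbf{m}})^\ast\}$ gives
\begin{displaymath}
X_i\cdot (\mathbf{x}^{\mathbf{m}})^\ast=(m_i+1)(\mathbf{x}^{\mathbf{m}+e_i})^\ast,\qquad Y_i\cdot(\mathbf{x}^{\mathbf{m}})^\ast=(\mathbf{x}^{\mathbf{m}-e_i})^\ast,
\end{displaymath}
so now $Y_i$ is always nonzero while $X_i$ degenerates precisely at $i$-th coordinate $-1$. Starting from $(\mathbf{x}^{\mathbf{p}})^\ast$ and again applying first the $Y_i$'s and then the $X_i$'s, every ascent begins at some coordinate value $p_i\geq 0$ (by the hypothesis of (a)) and passes through $p_i+1,p_i+2,\ldots$, never visiting $-1$. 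Thus $(\mathbf{x}^{\mathbf{p}})^\ast$ generates $B(\mathbf{p})^\star$, giving $B(\mathbf{p})^\star\cong P(\mathbf{p})$, and a second application of $\star$ yields $B(\mathbf{p})\cong P(\mathbf{p})^\star$, the injective envelope of $L(\mathbf{p})$.

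The only real subtlety is the ordering of the path: one must arrange that the transient coordinate values sidestep the single degenerate value of the operator in use ($0$ for $Y_i$ in case (b), $-1$ for $X_i$ in case (a)). The parity-type hypotheses on the integer entries of $\mathbf{p}$ are calibrated precisely so that the natural ``descend, then ascend'' path does this automatically; beyond this observation the proof is routine bookkeeping with the one-dimensional weight-space isomorphism.
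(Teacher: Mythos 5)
Your proof is correct and follows essentially the same route as the paper: both reduce, via the duality $\star$, to identifying $B(\mathbf{p})$ (respectively $B(\mathbf{p})^{\star}$) with $P(\mathbf{p})$, produce the morphism from $P(\mathbf{p})$ via the adjunction \eqref{eq37}, and upgrade it to an isomorphism by comparing characters of modules with one-dimensional weight spaces on the same support. The only cosmetic differences are that you verify surjectivity (generation by the weight-$\mathbf{p}$ vector) where the paper verifies injectivity --- both amount to the same non-vanishing of coefficient products --- and that you write out case \eqref{prop31.1} explicitly via the $\diamond$-twisted action, which the paper dismisses as ``proved similarly''.
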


\begin{proof}
We prove claim \eqref{prop31.2}. Claim \eqref{prop31.1} is proved similarly.  Let $\mathbf{p}\in \Bbbk^{\mathtt{I}}$ 
be such that $p_i\in\mathbb{Z}$ implies $p_i\in\{-1,-2,\dots\}$ for all $i$. As $\star$ is a duality, it is enough
to show that $B(\mathbf{p})\cong P(\mathbf{p})$. By construction, $B(\mathbf{p})_{\mathbf{p}}\neq 0$ which gives
a nonzero homomorphism $P(\mathbf{p})\to B(\mathbf{p})$ by \eqref{eq37}. This map is easily seen to be 
injective. Comparing the characters of $P(\mathbf{p})$ and $B(\mathbf{p})$, it follows that this map is bijective.
\end{proof}

Consider the set 
\begin{displaymath}
S:=\{\mathbf{m}\in \mathbf{p}+\mathbb{Z}_f^{\mathtt{I}}:
\forall i\,\, p_i\in\{0,1,2,\dots\}\Rightarrow
m_i\in\{0,1,2,\dots\}\} 
\end{displaymath}
and let $\displaystyle N:=
\bigoplus_{\mathbf{m}\in S} {B}(\mathbf{p})_{\mathbf{m}}$.
Then $N$ is obviously a submodule of ${B}(\mathbf{p})$.
Consider the set 
\begin{displaymath}
S':=\{\mathbf{m}\in S:
\forall i\,\, p_i\in\{-1,-2,\dots\}\Rightarrow
m_i\in\{0,1,2,\dots\}\} 
\end{displaymath}
and let $\displaystyle N':=
\bigoplus_{\mathbf{m}\in S'} {B}(\mathbf{p})_{\mathbf{m}}$.
Then $N'$ is obviously a submodule of $N$.

\subsection{Simple weight modules}\label{s1.6}

For $\mathbf{p}\in \Bbbk^{\mathtt{I}}$ denote by 
$\overline{\mathbf{p}}$ the set of all 
$\mathbf{k}\in \mathbf{p}+\mathbb{Z}_f^{\mathtt{I}}$ which satisfy the
following conditions for all $i\in\mathtt{I}$:
\begin{displaymath}
\begin{array}{lcl}
p_i\in\{-1,-2,-3,\dots\}&\text{ implies }&
k_i\in\{-1,-2,-3,\dots\};\\
p_i\in\{0,1,2,3,\dots\}&\text{ implies }&
k_i\in\{0,1,2,3,\dots\}.
\end{array}
\end{displaymath}
In the notation from the previous subsection we have
$\mathrm{supp}(N/N')=\overline{\mathbf{p}}$.

\begin{proposition}\label{prop3}
For $\mathbf{p}\in \Bbbk^{\mathtt{I}}$ we have:
\begin{enumerate}[$($a$)$]
\item\label{prop3.1} $\mathrm{supp}(L(\mathbf{p}))=
\overline{\mathbf{p}}$;
\item\label{prop3.2} if $\mathbf{s}\in \Bbbk^{\mathtt{I}}$,
then $L(\mathbf{p})\cong L(\mathbf{s})$ if and only if
$\mathbf{s}\in \overline{\mathbf{p}}$.
\end{enumerate}
\end{proposition}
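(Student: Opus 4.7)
The plan is to identify $L(\mathbf{p})$ with an explicit subquotient of the polynomial module $B(\mathbf{p})$ of Subsection~\ref{s1.5}, read off its support, and then derive part~(\ref{prop3.2}) from Proposition~\ref{prop2}(\ref{prop2.2}). First, I observe that $\mathbf{x}^{\mathbf{p}}$ is a weight vector of weight $\mathbf{p}$ in $B(\mathbf{p})$ and that the cyclic submodule it generates coincides with $N$: the only obstruction to reaching other monomials from it is the identity $\partial_i(x_i^0)=0$, which blocks exactly the passage from $m_i\geq 0$ to $m_i=-1$ when $p_i\in\mathbb{Z}_{\geq 0}$. Since $N$ is cyclic with a one-dimensional weight space at $\mathbf{p}$, the argument of Proposition~\ref{prop2}(\ref{prop2.1}) applies verbatim to produce a unique maximal submodule $N_{\max}\subseteq N$ with $N/N_{\max}\cong L(\mathbf{p})$.

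The heart of the proof of~(\ref{prop3.1}) is to identify $N_{\max}$ as the direct sum of weight spaces
$$N_{\max}\;=\;\bigoplus_{\mathbf{m}\in S\setminus\overline{\mathbf{p}}} B(\mathbf{p})_{\mathbf{m}}.$$
The set $S\setminus\overline{\mathbf{p}}$ is closed under the action of $X_i$ and $Y_i$: the only way to leave it would be for $Y_j$ to move a ``witness'' coordinate $m_j=0$ with $p_j\in\mathbb{Z}_{<0}$ down to $m_j=-1$, but $Y_j$ acts as zero on this weight space. The harder half, that every proper submodule of $N$ is contained in this direct sum, is a reachability argument: given any $\mathbf{m}\in\overline{\mathbf{p}}$, only finitely many coordinates of $\mathbf{m}$ differ from those of $\mathbf{p}$, and for each such $i$ the values $p_i$ and $m_i$ lie in the same ``sign region'' of $\mathbb{Z}$ or are both non-integer, so $m_i$ can be transported to $p_i$ by a finite sequence of $X_i$'s and $Y_i$'s with nonzero coefficients throughout (no $Y_i$ is applied at a zero spot). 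Hence any submodule of $N$ meeting $\bigoplus_{\mathbf{m}\in\overline{\mathbf{p}}}B(\mathbf{p})_{\mathbf{m}}$ contains $\mathbf{x}^{\mathbf{p}}$ and is all of $N$. This yields $\mathrm{supp}(L(\mathbf{p}))=\overline{\mathbf{p}}$, proving~(\ref{prop3.1}).

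For~(\ref{prop3.2}), the relation $\mathbf{s}\in\overline{\mathbf{p}}$ is easily checked to be an equivalence relation, so $\overline{\mathbf{p}}=\overline{\mathbf{s}}$ whenever $\mathbf{s}\in\overline{\mathbf{p}}$. If $\mathbf{s}\in\overline{\mathbf{p}}$, then by~(\ref{prop3.1}) the space $L(\mathbf{p})_{\mathbf{s}}$ is nonzero and one-dimensional (inherited from the one-dimensional weight spaces of $B(\mathbf{p})$), so Proposition~\ref{prop2}(\ref{prop2.2}) yields $L(\mathbf{p})\cong L(\mathbf{s})$. Conversely, $L(\mathbf{p})\cong L(\mathbf{s})$ gives $\overline{\mathbf{p}}=\mathrm{supp}(L(\mathbf{p}))=\mathrm{supp}(L(\mathbf{s}))=\overline{\mathbf{s}}$, and $\mathbf{s}\in\overline{\mathbf{s}}$ concludes $\mathbf{s}\in\overline{\mathbf{p}}$. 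The main obstacle throughout is the reachability half of the $N_{\max}$ identification; the remaining steps are formal.
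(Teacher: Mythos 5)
Your proof is correct and follows essentially the same route as the paper: both realize $L(\mathbf{p})$ as the quotient of the submodule $N\subseteq B(\mathbf{p})$ (which you additionally identify as the cyclic module generated by $\mathbf{x}^{\mathbf{p}}$) by the sum of the weight spaces whose weights lie outside $\overline{\mathbf{p}}$, both rest on the same coordinate-wise reachability computation whose only obstruction is $\partial_i x_i^0=0$, and both then invoke Proposition~\ref{prop2}(\ref{prop2.2}) to identify the quotient with $L(\mathbf{p})$ and to deduce part~(\ref{prop3.2}) from part~(\ref{prop3.1}). The only organizational difference is that the paper proves directly that the quotient $N/N'$ is simple, whereas you characterize the unique maximal submodule $N_{\max}$ of $N$ and quotient by it. One point genuinely in your favour: your identification $N_{\max}=\bigoplus_{\mathbf{m}\in S\setminus\overline{\mathbf{p}}}B(\mathbf{p})_{\mathbf{m}}$ is what the paper's $N'$ must be for its argument to go through, while the printed definition of $S'$ in Subsection~\ref{s1.5} (a universal quantifier over the indices $i$ with $p_i\in\{-1,-2,\dots\}$, where the complement of $\overline{\mathbf{p}}$ in $S$ requires an existential one) agrees with $S\setminus\overline{\mathbf{p}}$ only when there is exactly one such index, and for instance gives $N'=N$, hence $N/N'=0$, when there is none (e.g. $\mathbf{p}=\mathbf{0}$); your formulation quietly repairs that slip.
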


\begin{proof}
First we show that the module $N/N'$ constructed in the 
previous subsection is simple. Let $\mathbf{m}\in
\overline{\mathbf{p}}$ and $v\in (N/N')_{\mathbf{m}}$
be a nonzero element. We have to show that $v$ generates
$N/N'$. For this it is enough to check that, given
$i\in \mathtt{I}$, we have $X_i v\neq 0$ if
$m_i\neq -1$ and we have $Y_i v\neq 0$ if
$m_i\neq 0$. However, both these claims follow directly from
the definitions.

As $(N/N')_{\mathbf{p}}\neq 0$ by construction,
we have $N/N'\cong L(\mathbf{p})$ by 
Proposition~\ref{prop2}\eqref{prop2.2}. Now claim
\eqref{prop3.1} follows by construction and claim 
\eqref{prop3.2} follows from claim  \eqref{prop3.1}
and Proposition~\ref{prop2}.
\end{proof}

The above results can now be summarized as follows:
let $\sim$ be the equivalence relation on $\Bbbk^{\mathtt{I}}$
defined as follows: $\mathbf{p}\sim\mathbf{m}$ if and
only if $\overline{\mathbf{p}}=\overline{\mathbf{m}}$.

\begin{theorem}[Classification of simple weight 
$A$-modules]\label{thm3}
The map $\overline{\mathbf{p}}\mapsto L(\mathbf{p})$,
$\mathbf{p}\in \Bbbk^{\mathtt{I}}$, is
a bijection between $\Bbbk^{\mathtt{I}}/\sim$ and the
set of isomorphism classes of simple weight 
$A$-modules.
\end{theorem}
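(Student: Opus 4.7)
The theorem is essentially a repackaging of Propositions~\ref{prop2} and \ref{prop3}, so my plan is to isolate what each of them gives us and then combine them. The relation $\sim$ was defined precisely by $\mathbf{p}\sim\mathbf{m}\Leftrightarrow \overline{\mathbf{p}}=\overline{\mathbf{m}}$, so the assignment $\mathbf{p}\mapsto L(\mathbf{p})$ descends to a well-defined map from $\Bbbk^{\mathtt{I}}/\sim$ into isomorphism classes of simple weight modules precisely by Proposition~\ref{prop3}\eqref{prop3.2}, which also supplies injectivity of the induced map: if $L(\mathbf{p})\cong L(\mathbf{s})$ then $\mathbf{s}\in\overline{\mathbf{p}}$, hence $\overline{\mathbf{p}}=\overline{\mathbf{s}}$.

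The only remaining content is surjectivity: every simple $S\in\mathfrak{W}$ is isomorphic to some $L(\mathbf{p})$. For this I would argue as follows. Since $S$ is a nonzero weight module, there exists $\mathbf{p}\in\Bbbk^{\mathtt{I}}$ with $S_{\mathbf{p}}\ne 0$; pick a nonzero $v\in S_{\mathbf{p}}$. By the adjunction isomorphism \eqref{eq37}, the vector $v$ corresponds to a nonzero homomorphism $\varphi:P(\mathbf{p})\to S$. Because $S$ is simple, $\varphi$ is surjective, so $S$ is a simple quotient of $P(\mathbf{p})$. By Proposition~\ref{prop2}\eqref{prop2.1}, $P(\mathbf{p})$ has a unique simple quotient, namely $L(\mathbf{p})$, so $S\cong L(\mathbf{p})$.

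Putting the two halves together yields a bijection $\Bbbk^{\mathtt{I}}/\sim\;\longleftrightarrow\;\{\text{simple weight $A$-modules}\}/\!\cong$, as required.

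There is really no main obstacle here; all the genuinely nontrivial work has already been done in the earlier propositions, whose proofs relied on Lemma~\ref{lem1} (maximal ideals of $A_0$ are given by points of $\Bbbk^{\mathtt{I}}$) and on the explicit construction of $N/N'$ realizing $L(\mathbf{p})$. The proof of Theorem~\ref{thm3} is then a short formal assembly of those ingredients.
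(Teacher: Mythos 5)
Your proof is correct and is essentially the paper's own argument: the paper gives no separate proof of Theorem~\ref{thm3} (it is introduced with ``the above results can now be summarized as follows''), and your assembly—well-definedness and injectivity from Proposition~\ref{prop3}, surjectivity via a nonzero weight vector, the adjunction \eqref{eq37}, and the unique simple quotient of $P(\mathbf{p})$ from Proposition~\ref{prop2}—is exactly the intended combination of those ingredients (indeed, your surjectivity argument is the same one the paper uses implicitly to justify the uniqueness statement in Proposition~\ref{prop2}\eqref{prop2.2}).
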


\begin{example}\label{ex9}
{\rm 
Take $\mathbf{p}=(1,1,....)$, i.e. $p_i = 1$ for every $i$. The simple module
$L(\mathbf{p})$ locally finite with respect to each $\partial_i$, however, there is no $v \in L(\mathbf{p})$ 
for which $\partial_i v = 0$ for every $i$. Similar examples were considered in \cite[Subsection 4.2]{MZ}.
}
\end{example}

\begin{remark}\label{rem8}
{\rm 
Theorem~\ref{thm3} transfers
mutatis mutandis to tensor products of rank one 
generalized Weyl algebras  in the sense of  \cite{Ba} associated to $\Bbbk[x]$ and
$\sigma:\Bbbk[x]\to \Bbbk[x]$ defined by $\sigma(x)=x+1$.
}
\end{remark}

\subsection{Twisted polynomial realizations}\label{s1.7}

For $\mathtt{J}\subset\mathtt{I}$ let $\theta_J$ denote the automorphism of  $A$ given by
\begin{eqnarray*}
\theta_{\mathtt{J}} (X_j)  =  Y_j, \theta_{\mathtt{J}}(Y_j) =  -X_j, &\mbox{ if }& j \in {\mathtt{J}};\\
\theta_{\mathtt{J}} (X_i)  =  X_i, \theta_{\mathtt{J}}(Y_i) = Y_i, & \mbox{ if }& i \notin {\mathtt{J}}.
\end{eqnarray*}
For $i\in \mathtt{I}$ we have
\begin{equation}\label{eq47}
\theta_{\mathtt{J}} (t_i)=\begin{cases}-t_i-1,&i\in \mathtt{J}\\t_i,& i\not\in \mathtt{J}. \end{cases}
\end{equation}
For an $A$-module $M$, the module obtained by  twisting the $A$-action on $M$  by $\theta_{\mathtt{J}}$ 
will be denoted by $M^{\theta_{\mathtt{J}}}$. From the above we have that $M^{\theta_{\mathtt{J}}}$ is a weight
module if and only if $M$ is. Furthermore, $\mathbf{p}\in \mathrm{supp}(M)$ if and only if 
$\theta_J (\mathbf{p}) \in \mathrm{supp}(M^{\theta_{\mathtt{J}}})$, where $\theta_J (\mathbf{p})_i=p_i$ if $i\not\in \mathtt{J}$ and
$\theta_J (\mathbf{p})_i=-p_i-1$ if $i\in \mathtt{J}$.

Denote by $\Bbbk^{\mathtt{I}}_+$ the set of all $\mathbf{p}\in \Bbbk^{\mathtt{I}}$ such that 
$p_i\in\mathbb{Z}$ implies $p_i\in\{0,1,2,\dots\}$ for all $i$. For $\mathbf{p}\in \Bbbk^{\mathtt{I}}_+$ denote by 
$\mathtt{J}_{\mathbf{p}}$ the set of all $i\in\mathtt{I}$ such that $p_i\in\mathbb{Z}$. From
Theorem~\ref{thm3} we immediately obtain the following:

\begin{corollary}\label{cor41}
\begin{enumerate}[$($a$)$]
\item\label{cor41.1} Let $L$ be a simple weight $A$-module. Then there are unique $\mathbf{p}\in \Bbbk^{\mathtt{I}}_+$ and
$\mathtt{J}\subset \mathtt{J}_{\mathbf{p}}$ such that $L\cong L(\mathbf{p})^{\theta_{\mathtt{J}}}$. In fact, if $L =  L(\mathbf{p})$, then $\mathtt{J} = \{ i \in \mathtt{I} : q_i \in \{-1,-2,... \} \}$ and 
$\mathbf{p} = \theta_{\mathtt{J}} (\mathbf{q})$.
\item\label{cor41.2} For $\mathbf{p},\mathbf{q}\in \Bbbk^{\mathtt{I}}_+$, $\mathtt{J}\subset \mathtt{J}_{\mathbf{p}}$
and $\mathtt{J}'\subset \mathtt{J}_{\mathbf{q}}$ we have $L(\mathbf{p})^{\theta_{\mathtt{J}}}\cong
L(\mathbf{q})^{\theta_{\mathtt{J}'}}$ if and only if $\mathbf{p}=\mathbf{q}$ and $\mathtt{J}=\mathtt{J}'$.
\end{enumerate}
\end{corollary}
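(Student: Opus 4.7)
The plan is to derive the corollary as a direct consequence of Theorem~\ref{thm3} and (\ref{eq47}), the formula describing how $\theta_{\mathtt{J}}$ acts on weights. The key auxiliary fact is that, for any $\mathbf{p}\in\Bbbk^{\mathtt{I}}$ and any $\mathtt{J}\subset\mathtt{J}_{\mathbf{p}}$, we have $L(\mathbf{p})^{\theta_{\mathtt{J}}}\cong L(\theta_{\mathtt{J}}(\mathbf{p}))$, where $\theta_{\mathtt{J}}(\mathbf{p})\in\Bbbk^{\mathtt{I}}$ denotes the weight obtained via the rule from (\ref{eq47}). To see this, twisting by the algebra automorphism $\theta_{\mathtt{J}}$ preserves simplicity and, by (\ref{eq47}), relabels each weight $\mathbf{k}$ as $\theta_{\mathtt{J}}(\mathbf{k})$ without changing the dimensions of weight spaces. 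Hence $L(\mathbf{p})^{\theta_{\mathtt{J}}}$ is a simple weight module with one-dimensional weight space at $\theta_{\mathtt{J}}(\mathbf{p})$, and Proposition~\ref{prop2}(b) identifies it with $L(\theta_{\mathtt{J}}(\mathbf{p}))$. Observe also that $\theta_{\mathtt{J}}$ is an involution of $\Bbbk^{\mathtt{I}}$ when restricted to $\mathtt{J}$-coordinates.

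For part (\ref{cor41.1}), given a simple weight module $L$, I would first invoke Theorem~\ref{thm3} to write $L\cong L(\mathbf{q})$ for some $\mathbf{q}\in\Bbbk^{\mathtt{I}}$, and then set
\begin{equation*}
\mathtt{J}:=\{i\in\mathtt{I}:q_i\in\{-1,-2,\dots\}\},\qquad\mathbf{p}:=\theta_{\mathtt{J}}(\mathbf{q}).
\end{equation*}
Using (\ref{eq47}), the entries $p_i$ for $i\in\mathtt{J}$ lie in $\{0,1,\dots\}$, while $p_i=q_i$ for $i\notin\mathtt{J}$ is either non-integer or a non-negative integer; hence $\mathbf{p}\in\Bbbk^{\mathtt{I}}_+$ and $\mathtt{J}\subset\mathtt{J}_{\mathbf{p}}$. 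The key identity together with $\theta_{\mathtt{J}}^2=\mathrm{id}$ then yields $L(\mathbf{p})^{\theta_{\mathtt{J}}}\cong L(\theta_{\mathtt{J}}(\mathbf{p}))=L(\mathbf{q})\cong L$, which establishes existence along with the explicit formulas stated in the corollary.

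For part (\ref{cor41.2}), an isomorphism $L(\mathbf{p})^{\theta_{\mathtt{J}}}\cong L(\mathbf{q})^{\theta_{\mathtt{J}'}}$ becomes $L(\theta_{\mathtt{J}}(\mathbf{p}))\cong L(\theta_{\mathtt{J}'}(\mathbf{q}))$ via the key identity, hence $\overline{\theta_{\mathtt{J}}(\mathbf{p})}=\overline{\theta_{\mathtt{J}'}(\mathbf{q})}$ by Theorem~\ref{thm3}. The coordinates split into three classes: for $i\in\mathtt{J}$ the entry $\theta_{\mathtt{J}}(\mathbf{p})_i=-p_i-1$ is a negative integer (since $p_i\in\{0,1,\dots\}$); for $i\in\mathtt{J}_{\mathbf{p}}\setminus\mathtt{J}$ it is a non-negative integer; and for $i\notin\mathtt{J}_{\mathbf{p}}$ it is non-integer. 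An identical trichotomy holds for $\theta_{\mathtt{J}'}(\mathbf{q})$. Since the defining conditions of $\overline{\cdot}$ preserve integrality and the sign class of integer coordinates, the trichotomies match coordinate-wise, forcing $\mathtt{J}_{\mathbf{p}}=\mathtt{J}_{\mathbf{q}}$ and $\mathtt{J}=\mathtt{J}'$. Applying $\theta_{\mathtt{J}}$ back and using the normalization $\mathbf{p},\mathbf{q}\in\Bbbk^{\mathtt{I}}_+$ then yields $\mathbf{p}=\mathbf{q}$. The main obstacle I expect is precisely this final equality: the relation $\overline{\cdot}$ does not a priori pin down an element of $\Bbbk^{\mathtt{I}}$ uniquely, so the argument must fully exploit the normalization afforded by $\Bbbk^{\mathtt{I}}_+$ together with the constraint $\mathtt{J}\subset\mathtt{J}_{\mathbf{p}}$ to fix the exact values of $\mathbf{p}$ on both integer and non-integer coordinates.
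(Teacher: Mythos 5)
Your overall route is exactly the one the paper intends: the paper gives no written proof at all (the corollary is stated as following ``immediately'' from Theorem~\ref{thm3}), and your key identity $L(\mathbf{p})^{\theta_{\mathtt{J}}}\cong L(\theta_{\mathtt{J}}(\mathbf{p}))$ (via simplicity under twisting, the weight formula \eqref{eq47}, and Proposition~\ref{prop2}\eqref{prop2.2}), the existence argument in part \eqref{cor41.1}, and the deduction $\mathtt{J}=\mathtt{J}'$ and $\mathtt{J}_{\mathbf{p}}=\mathtt{J}_{\mathbf{q}}$ in part \eqref{cor41.2} are all correct.

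The genuine gap is the final step of part \eqref{cor41.2}, where you assert that applying $\theta_{\mathtt{J}}$ back and invoking the normalization $\mathbf{p},\mathbf{q}\in\Bbbk^{\mathtt{I}}_+$ yields $\mathbf{p}=\mathbf{q}$. This step cannot be carried out, because that conclusion is false as literally stated: the set $\overline{\mathbf{a}}$ remembers, in each coordinate, only the coset $a_i+\mathbb{Z}$ when $a_i\notin\mathbb{Z}$ and only the sign class when $a_i\in\mathbb{Z}$, and membership in $\Bbbk^{\mathtt{I}}_+$ does not shrink these ambiguities to a point. Concretely, fix $i_0\in\mathtt{I}$, let $\mathbf{p}=\mathbf{0}$ and let $\mathbf{q}$ be defined by $q_{i_0}=1$ and $q_i=0$ for $i\neq i_0$. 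Then $\mathbf{p},\mathbf{q}\in\Bbbk^{\mathtt{I}}_+$, $\mathtt{J}=\mathtt{J}'=\varnothing$, and $\mathbf{q}\in\overline{\mathbf{p}}$, so $L(\mathbf{p})\cong L(\mathbf{q})$ by Proposition~\ref{prop3}\eqref{prop3.2} (both are the polynomial module $B$), yet $\mathbf{p}\neq\mathbf{q}$; a similar example exists on non-integral coordinates ($p_{i_0}=1/2$ versus $q_{i_0}=3/2$). What your argument actually proves---and all that Theorem~\ref{thm3} can possibly give---is $\mathtt{J}=\mathtt{J}'$ together with $\overline{\mathbf{p}}=\overline{\mathbf{q}}$, i.e.\ $\mathbf{p}\sim\mathbf{q}$. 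In other words, the corollary's uniqueness must be read modulo the equivalence $\sim$: the uniquely determined data is the pair consisting of $\mathtt{J}$ and the $\sim$-class of $\mathbf{p}$ (equivalently, the isomorphism class of $L(\mathbf{p})$); note that the formula $\mathbf{p}=\theta_{\mathtt{J}}(\mathbf{q})$ in part \eqref{cor41.1} already depends on the choice of representative $\mathbf{q}$, which signals the same issue. You yourself flagged this final equality as ``the main obstacle,'' but then waved it through rather than resolving it; it cannot be resolved, only restated. Once you replace ``$\mathbf{p}=\mathbf{q}$'' by ``$\mathbf{p}\sim\mathbf{q}$'' (and phrase uniqueness in \eqref{cor41.1} accordingly), your proof is complete and coincides with the intended derivation from Theorem~\ref{thm3}.
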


Combining this with Proposition~\ref{prop31} we obtain:

\begin{corollary}\label{cor42}
Let $\mathbf{p}\in \Bbbk^{\mathtt{I}}_+$ and $\mathtt{J}\subset \mathtt{J}_{\mathbf{p}}$.
Then $B(\mathbf{p})^{\theta_{\mathtt{J}}}$ and $(B(\mathbf{p})^{\theta_{\mathtt{J}}})^{\star}$ 
are the injective envelope and the projective cover of $L(\mathbf{p})^{\theta_{\mathtt{J}}}$, respectively.
\end{corollary}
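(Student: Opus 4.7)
The plan is to deduce the corollary from Proposition~\ref{prop31}\eqref{prop31.1} by transport along the twist $(-)^{\theta_{\mathtt{J}}}$. Since $\theta_{\mathtt{J}}$ is an algebra automorphism of $A$ (not merely an anti-automorphism), the functor $(-)^{\theta_{\mathtt{J}}}$ is an autoequivalence of $A\text{-}\mathrm{mod}$. By the discussion opening Subsection~\ref{s1.7}, it sends weight modules to weight modules with support transported by $\theta_{\mathtt{J}}$, and it plainly preserves dimensions of weight spaces, so it restricts to an autoequivalence of $\mathfrak{W}^f$. In particular, it preserves projectivity, injectivity, and the properties of being a projective cover or an injective envelope.

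The first step is immediate: since $\mathbf{p}\in \Bbbk^{\mathtt{I}}_+$, Proposition~\ref{prop31}\eqref{prop31.1} gives that $B(\mathbf{p})$ is the injective envelope of $L(\mathbf{p})$, and applying the autoequivalence $(-)^{\theta_{\mathtt{J}}}$ yields that $B(\mathbf{p})^{\theta_{\mathtt{J}}}$ is the injective envelope of $L(\mathbf{p})^{\theta_{\mathtt{J}}}$.

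For the projective statement I would not try to twist the cover $B(\mathbf{p})^{\star}$ supplied by Proposition~\ref{prop31}\eqref{prop31.1} directly, because $\diamond$ and $\theta_{\mathtt{J}}$ do not commute on the generators of $A$, so that $(B(\mathbf{p})^{\star})^{\theta_{\mathtt{J}}}$ is not tautologically the same $A$-module as $(B(\mathbf{p})^{\theta_{\mathtt{J}}})^{\star}$. Instead, I transport through the duality $\star$. The argument in Subsection~\ref{s1.45} applies verbatim to $L(\mathbf{p})^{\theta_{\mathtt{J}}}$: since $\diamond$ fixes $A_0$ pointwise, the support is preserved by $\star$, and the uniqueness statement in Proposition~\ref{prop2}\eqref{prop2.2} applied to the one-dimensional weight space at $\theta_{\mathtt{J}}(\mathbf{p})$ gives $(L(\mathbf{p})^{\theta_{\mathtt{J}}})^{\star}\cong L(\mathbf{p})^{\theta_{\mathtt{J}}}$. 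Because $\star$ is a duality on $\mathfrak{W}^f$ interchanging injective envelopes and projective covers, the $\star$-dual $(B(\mathbf{p})^{\theta_{\mathtt{J}}})^{\star}$ of the injective envelope is the projective cover of $L(\mathbf{p})^{\theta_{\mathtt{J}}}$, as required.

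I do not expect any serious obstacle; the only subtle point is the one flagged above, namely that the identification of the projective cover must go through the uniqueness of covers rather than via a naive commutation of $\star$ with the twist $(-)^{\theta_{\mathtt{J}}}$.
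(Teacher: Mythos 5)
Your proof is correct and follows what is essentially the paper's own (implicit) approach: the paper states the corollary as an immediate consequence of combining Corollary~\ref{cor41} with Proposition~\ref{prop31}, i.e.\ of transporting Proposition~\ref{prop31}\eqref{prop31.1} along the autoequivalence $(-)^{\theta_{\mathtt{J}}}$, which is exactly your first step. Your extra care on the projective side --- routing through the self-duality $(L(\mathbf{p})^{\theta_{\mathtt{J}}})^{\star}\cong L(\mathbf{p})^{\theta_{\mathtt{J}}}$ and the fact that $\star$ exchanges injective envelopes and projective covers, rather than naively commuting $\star$ with the twist --- correctly resolves a subtlety (the maps $\diamond\circ\theta_{\mathtt{J}}$ and $\theta_{\mathtt{J}}\circ\diamond$ differ by a sign on generators indexed by $\mathtt{J}$) that the paper silently glosses over.
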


\subsection{Localization realizations}\label{s1.9}

Let $\mathtt{J}\subset\mathtt{I}$. The adjoint action of $X_i$ on $A$ is locally nilpotent and hence 
$X_i$, $i\in \mathtt{J}$, generate a multiplicative Ore subset of $A$ giving rise to the corresponding  
Ore localization ${\mathbf D}_J A$ of $A$. Define the functor $\mathrm{F}_J := 
\mathrm{Res}^{{\mathbf  D}_J A}_{A}\circ \mathrm{Ind}^{{\mathbf  D}_J A}_{A}$  on the category $A$-mod.

Furthermore, similarly to \cite[Lemma~4.3]{Ma}, the algebra ${\mathbf  D}_J A$ has a 
family of automorphisms $\varphi_{\mathbf{x}}$, $\mathbf{x}\in\Bbbk^{\mathtt{J}}$, which are polynomial in
components of $\mathbf{x}$ and such that for $\mathbf{x}\in\mathbb{Z}^{\mathtt{J}}$ and
for any $a\in A$ we have 
\begin{displaymath}
\varphi_{\mathbf{x}}(a)= \prod_{i\in \mathtt{J}}X_i^{-x_i}\,a\,\prod_{i\in \mathtt{J}}X_i^{x_i}
\end{displaymath}
(note that if $a$ is fixed, then it commutes with all but finitely many of the $X_i$'s and hence the expression 
makes sense by canceling out all other terms). For a ${\mathbf  D}_J A$-module $M$, denote by 
$M^{\varphi_{\mathbf{x}}}$ the ${\mathbf  D}_J A$-mo\-dule obtained from $M$ after twisting by $\varphi_{\mathbf{x}}$. Note that $\mathrm{supp}(M^{\varphi_{\mathbf{x}}}) = \mathrm{supp}(M) + \mathbf{x}$, where $\mathbf{x}$ is considered 
as an element of $\Bbbk^{\mathtt{I}}$ by setting all components in $\mathtt{I} \setminus \mathtt{J}$ to be zero.

\begin{proposition}\label{propnew}
Let  $\mathbf{p}\in \Bbbk^{\mathtt{I}}_+$.
\begin{enumerate}[$($a$)$]
\item\label{propnew.1}
$L(\mathbf{p}) \simeq \mathrm{F}^{\mathbf{p'}}_{\mathtt{I} \setminus \mathtt{J}_{\mathbf{p}}} L(\mathbf{0})$, where $\mathbf{p'} \in \Bbbk^{\mathtt{I} \setminus \mathtt{J}_{\mathbf{p}}}$ with $p_i'=p_i$.
\item\label{propnew.2} $B(\mathbf{p}) \simeq  \mathrm{F}_{\mathtt{J}_{\mathbf{p}}} L(\mathbf{p})$.
\end{enumerate}
\end{proposition}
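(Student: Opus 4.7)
The plan is to prove both parts by computing supports and invoking the uniqueness in Proposition~\ref{prop2}(b) together with a simplicity argument; the polynomial realizations of Subsection~\ref{s1.5} give the concrete handles on the modules involved.

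For part (b), I would work inside the polynomial model $B(\mathbf{p})$. Since $\mathbf{p}\in\Bbbk^{\mathtt{I}}_+$, the set $\overline{\mathbf{p}}$ consists precisely of those $\mathbf{m}\in\mathbf{p}+\mathbb{Z}_f^{\mathtt{I}}$ with $m_i\in\{0,1,2,\dots\}$ for all $i\in\mathtt{J}_{\mathbf{p}}$, so Proposition~\ref{prop3} identifies $L(\mathbf{p})$ with the $A$-submodule of $B(\mathbf{p})$ spanned by the monomials $\mathbf{x}^{\mathbf{m}}$, $\mathbf{m}\in\overline{\mathbf{p}}$. Every basis monomial of $B(\mathbf{p})$ is obtained from one of these by applying a finite product of $X_i^{-1}$ with $i\in\mathtt{J}_{\mathbf{p}}$, and each such $X_i$ acts injectively on $L(\mathbf{p})$. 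Since $\{X_i:i\in\mathtt{J}_{\mathbf{p}}\}$ is the Ore set used to form ${\mathbf D}_{\mathtt{J}_{\mathbf{p}}}A$, this exhibits $B(\mathbf{p})$ as the ${\mathbf D}_{\mathtt{J}_{\mathbf{p}}}A$-localization of $L(\mathbf{p})$, i.e.\ as $\mathrm{F}_{\mathtt{J}_{\mathbf{p}}}L(\mathbf{p})$.

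For part (a), I would use the formula $\mathrm{supp}(M^{\varphi_{\mathbf{x}}})=\mathrm{supp}(M)+\mathbf{x}$ recorded earlier. The localization ${\mathbf D}_{\mathtt{I}\setminus\mathtt{J}_{\mathbf{p}}}A\otimes_A L(\mathbf{0})$ inverts $X_i$ for $i\notin\mathtt{J}_{\mathbf{p}}$ but otherwise leaves the polynomial action alone, so it is a weight module with one-dimensional weight spaces and support $\{\mathbf{k}\in\mathbb{Z}_f^{\mathtt{I}}:k_i\in\{0,1,2,\dots\}\text{ for }i\in\mathtt{J}_{\mathbf{p}}\}$. Twisting by $\varphi_{\mathbf{p'}}$ shifts this support by $\mathbf{p'}$, and since $\mathbf{p}\in\Bbbk^{\mathtt{I}}_+$ the result is exactly $\overline{\mathbf{p}}$. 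Starting from the nonzero vector of weight $\mathbf{p}$, every other weight in $\overline{\mathbf{p}}$ is reached by applying a suitable product of $X_i^{\pm 1}$ for $i\notin\mathtt{J}_{\mathbf{p}}$ and $X_i$ or $Y_i$ for $i\in\mathtt{J}_{\mathbf{p}}$ (staying within the non-negative range in those directions). Hence any weight vector generates the module, so the module is simple, and Proposition~\ref{prop2}(b) identifies it with $L(\mathbf{p})$.

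The main point requiring care is the polynomial dependence of $\varphi_{\mathbf{x}}$ on $\mathbf{x}$, extending the monomial-conjugation formula from $\mathbf{x}\in\mathbb{Z}^{\mathtt{J}}$ to arbitrary $\mathbf{x}\in\Bbbk^{\mathtt{J}}$; this is the content of \cite[Lemma~4.3]{Ma} quoted in the paper and is taken for granted. Once granted, the rest of the argument in both parts is bookkeeping of supports and weight multiplicities.
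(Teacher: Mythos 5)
Your proposal is correct and follows essentially the same route as the paper: part (a) by computing the support of the twisted localization of $L(\mathbf{0})$ and invoking the classification (Theorem~\ref{thm3} together with Proposition~\ref{prop2}(b)), and part (b) by matching the localization of the polynomial model $L(\mathbf{p})\subset B(\mathbf{p})$ against $B(\mathbf{p})$ monomial by monomial. The differences are only in packaging: you spell out the simplicity of the twisted localization (which the paper compresses into the assertion that support determines a weight module with multiplicity-one weight spaces), and in (b) you argue via generation and injectivity of the $X_i$'s where the paper writes the explicit basis map $\mathbf{x}^{\mathbf{m}}\mathbf{x}^{\mathbf{q}}\mapsto\mathbf{x}^{\mathbf{m}+\mathbf{q}}$ --- both amount to the same computation.
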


\begin{proof}
By Theorem \ref{thm3},  $L \mapsto \mbox{supp} (L)$ defines a bijection between the set of simple weight 
$A$-modules and $\Bbbk^{\mathtt{I}}/\sim$. Since every simple module has weight multiplicities at most $1$, 
for a weight $A$-module $M$ we have that $M \simeq L(\mathbf{p})$ if and only if $\mbox{supp} (M) = 
\mbox{supp} (L(\mathbf{p}))$. On the other hand, $\mbox{supp}(\mathrm{F}^{\mathbf{p}}_{\mathtt{I} \setminus \mathtt{J}_{\mathbf{p}}} L(\mathbf{0})) = \overline{\mathbf{p}} =   \mbox{supp} (L(\mathbf{p}))$, 
which implies  claim \eqref{propnew.1}. For claim \eqref{propnew.2} we use the polynomial  
description of  $B(\mathbf{p})$ from Subsection \ref{s1.5}. A basis vector of 
$\mathrm{F}_{\mathtt{J}_{\mathbf{p}}} L(\mathbf{p})$ can be written uniquely in the form 
$\mathbf{x}^{\mathbf{m}} \mathbf{x}^{\mathbf{q}}$, where  $m_i \in \{0,-1,-2,... \}$, 
$i \in \mathtt{J}_{\mathbf{p}}$, $m_j = 0$, $j \notin \mathtt{J}_{\mathbf{p}}$,  and 
$ \mathbf{q} \in \overline{\mathbf{p}}$. Then  $\mathbf{x}^{\mathbf{m}} \mathbf{x}^{\mathbf{q}} \mapsto \mathbf{x}^{\mathbf{m+q}}$ 
defines an isomorphism $\mathrm{F}_{\mathtt{J}_{\mathbf{p}}} L(\mathbf{p}) \simeq B(\mathbf{p})$. 
\end{proof}

 Note that due to Corollary \ref{cor41}, the injective envelope of every simple weight module $L$ is isomorphic to the localization of $L$ relative to an appropriate Ore's multiplicative subset of $A$. Presenting the indecomposable injectives as localizations of their simple submodules is an idea explored for  categories of weight modules of the symplectic Lie algebras in \cite{GS}.

\section{Quiver of $\mathfrak{W}$}\label{s3}

\subsection{Explicit description of the quiver}\label{s3.1}

For a nonempty set $E$ define the quiver $Q=Q_E$ as follows:
The vertices of $Q$ are all finite subsets of $E$. 
For $U,W\in Q$ there is one arrow from $U$ to $W$ if and 
only if the symmetric difference $U\triangle W$ is a singleton. 
Impose in $Q$ the following relations: if $U$ and $W$ are such 
that there are arrows $\alpha:U\to W$ and $\beta:W\to U$,
then $\alpha\beta=\beta\alpha=0$; if $U,W,U',W'$ are
different subsets such that there are arrows
$\alpha:U\to W$, $\beta:W\to W'$, $\alpha':U\to U'$
and $\beta':U'\to W'$, then $\beta\alpha=\beta'\alpha'$.
For example, if $E=\{1,2\}$, then the corresponding quiver $Q$
looks as follows:
\begin{displaymath}
\xymatrix{
\varnothing\ar@/^/[rr]^{\eta}\ar@/_/[d]_{\xi}&& \{1\}\ar@/^/[d]^{\xi}\ar@/^/[ll]^{\eta}\\
\{2\}\ar@/_/[rr]_{\eta}\ar@/_/[u]_{\xi}&& \{1,2\}\ar@/_/[ll]_{\eta}\ar@/^/[u]^{\xi}\\
}
\end{displaymath}
The relations are: $\eta^2=\xi^2=0 \quad\text{ and }\quad \eta\xi=\xi\eta$. 
The path category of $Q$ with the above relations will be denoted $C_E$. 

The category $C_E$ is canonically isomorphic to the tensor product $\displaystyle \bigotimes_{i\in E}C_{\{i\}}$,
where in the case of infinite $E$ the tensor product is understood as the direct limit of the directed system
formed by all tensor products with respect to finite subsets of $E$ (see e.g. \cite{Bl}).
Note that the algebra $C_{\{1\}}$ is the path category of the following quiver with relations:
\begin{equation}\label{eq22}
\xymatrix{\varnothing\ar@/^/[rr]^{\alpha}&&
\{1\}\ar@/^/[ll]^{\alpha}},
\quad\quad\alpha^2=0. 
\end{equation}
It is easy to see that in the case of finite $E$ all projective $C_E$-modules are injective and have the
same Loewy length.

\subsection{Koszulity}\label{s3.2}

Recall that a $\mathbb{Z}$-graded associative $\Bbbk$-algebra $\displaystyle
C=\bigoplus_{i\in\mathbb{Z}}C_i$ is called
{\em Koszul} if $C_0$ is semisimple, $C_i=0$ for $i<0$, and the $i$-th component of the minimal 
graded projective resolution of $C_0$ is generated in degree $i$ (such resolution is called {\em linear}). 
Similarly one defines Koszulity for $\Bbbk$-linear categories.

\begin{proposition}\label{prop21}
Let $E$ be as in the previous subsection. Then the category $C_E$ is Koszul.
\end{proposition}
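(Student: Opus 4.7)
The plan is to use the decomposition $C_E\cong \bigotimes_{i\in E} C_{\{i\}}$ established in Subsection~\ref{s3.1} to reduce the problem to two subclaims: (i)~Koszulity of the single-generator category $C_{\{1\}}$, and (ii)~stability of Koszulity under (possibly infinite) tensor products of such categories.

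For (i), I will directly write down the minimal projective resolution of each of the two simple $C_{\{1\}}$-modules. The two indecomposable projectives $P_\varnothing$ and $P_{\{1\}}$ both have Loewy length $2$, with their tops and socles at opposite vertices. The relation $\alpha^2=0$ then forces the minimal resolution of $L_\varnothing$ to be the $2$-periodic complex
\begin{displaymath}
\cdots \to P_\varnothing \to P_{\{1\}} \to P_\varnothing \to L_\varnothing \to 0,
\end{displaymath}
with every differential acting by left multiplication by the corresponding degree-$1$ arrow. The same recipe handles $L_{\{1\}}$. As each differential raises the internal (path-length) degree by $1$, the $n$-th term is generated in degree $n$, so the resolution is linear and $C_{\{1\}}$ is Koszul.

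For (ii), given a finite subset $U\subset E$, I will build a linear projective resolution of the simple module $L_U$ as the Koszul-sign tensor product of the factorwise resolutions from step~(i). Writing $U_i:=U\cap\{i\}$ and letting $P_{U_i}^{(j)}$ denote the $j$-th term of the factor resolution of $L_{U_i}$ over $C_{\{i\}}$, the $n$-th term of the tensor resolution is
\begin{displaymath}
\bigoplus_{\substack{f\colon E\to\mathbb{Z}_{\geq 0}\\ \sum_i f(i)=n}}\ \bigotimes_{i\in E} P_{U_i}^{(f(i))}.
\end{displaymath}
The constraint $\sum_i f(i)=n$ forces $f$ to be finitely supported, so each summand is a well-defined indecomposable projective of $C_E$, and it is generated in degree $n$ by construction.

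The main obstacle is verifying exactness of this complex when $E$ is infinite. The key observation is that the morphism space in $C_E$ from a vertex $U'$ to a vertex $W'$ vanishes unless $U'\triangle W'$ is finite, so evaluating the resolution at any single vertex $W\subset E$ involves only finitely many factors nontrivially. Exactness at each vertex then reduces to the classical K\"unneth formula applied to a finite tensor product of the linear resolutions of step~(i), completing the proof that $C_E$ is Koszul.
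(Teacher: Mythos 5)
Your step (i) and the construction of the candidate resolution in step (ii) are correct, and they follow essentially the same route as the paper (explicit two-periodic linear resolutions over each factor $C_{\{i\}}$, then the total complex of their tensor product). The problem is your exactness argument for infinite $E$. The ``key observation'' you invoke is vacuous: every vertex of $Q_E$ is by definition a \emph{finite} subset of $E$, so $U'\triangle W'$ is automatically finite for any pair of vertices and the vanishing condition never applies. In fact the opposite of what you need holds: in $C_E$ every morphism space between two vertices is one-dimensional and nonzero (it is spanned by the unique minimal path; any longer path can be rearranged by the commutativity relations so that two toggles of the same coordinate become adjacent, and then it dies by $\alpha\beta=\beta\alpha=0$). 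Consequently no summand of your complex dies upon evaluation: for $n\geq 1$ the degree-$n$ term of your resolution, evaluated at any vertex $W$, is a direct sum of one line for \emph{each} finitely supported $f$ with $\sum_i f(i)=n$, hence is infinite dimensional (already for $n=1$ there is one summand for each $i\in E$, which is unavoidable, since $\mathrm{rad}\,P_U$ is generated by the infinitely many arrows out of $U$). So evaluation at a vertex does \emph{not} reduce the complex to a finite tensor product, and the classical K\"unneth formula for finitely many factors cannot be applied directly; as written, this step fails.

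The missing idea, and the one the paper uses, is a direct limit argument. For finite $F\subset E$ let $T(F)$ denote the total complex of $\bigotimes_{i\in F}$ of the factor resolutions; for $F\subset F'$ there is a chain map $T(F)\to T(F')$ given by tensoring with the canonical generators of the zeroth terms of the new factors (these are cycles, since the deleted resolutions end in homological degree $0$). Your infinite total complex, evaluated at a vertex, is precisely the filtered colimit of the $T(F)$ evaluated there. Since homology commutes with filtered colimits, and each finite stage is exact in positive degrees by the finite K\"unneth formula (each factor complex has homology concentrated in degree $0$), the whole complex is exact in positive degrees and computes $L_U$ in degree $0$. This is exactly the paper's closing sentence, ``the infinite case follows by taking the direct limit with respect to the directed system given by finite subsets of $E$.'' With this replacement your proof is complete; without it, the exactness verification is a genuine gap.
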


\begin{proof}
For $t\in E$ the algebra $C_{\{t\}}$ is given by \eqref{eq22}. It is quadratic and monomial, hence Koszul (it 
is straightforward to write down linear projective resolutions of simple $C_{\{t\}}$-modules). 
The claim now follows from the standard observation that any tensor product of Koszul algebras is Koszul. 
To see the latter, fix linear resolutions for each simple module over every tensor factor of the product. Tensoring 
these resolutions together (one resolution per tensor factor) and taking the total complex in the usual way we 
obtain linear projective resolutions for simple $C_E$-modules. The case with finitely many factors can be
found for example in \cite{BF}. The infinite case follows by taking the direct limit with respect to the
directed system given by finite subsets of $E$.
\end{proof}

\subsection{Description of the blocks}\label{s3.3}

Finally, we would like to show that blocks of $\mathfrak{W}$ are described by $C_E$ for appropriate $E$.
Recall that for  $\mathbf{p}\in\Bbbk^{\mathtt{I}}$ we denote by 
$\mathtt{J}_{\mathbf{p}}$ the set of all $i\in \mathtt{I}$
such that $p_i\in\mathbb{Z}$. If $\mathtt{J}_{\mathbf{p}}=
\varnothing$, the projective module $P(\mathbf{p})$
is simple which means that $\mathfrak{W}_{\mathbf{p}}$
is semisimple and hence isomorphic to $\Bbbk\text{-}\mathrm{mod}$.

\begin{theorem}\label{thm27}
If $\mathtt{J}_{\mathbf{p}}\neq\varnothing$, then the category $\mathfrak{W}_{\mathbf{p}}$ is equivalent to the
category of modules over $C_{\mathtt{J}_{\mathbf{p}}}$.
\end{theorem}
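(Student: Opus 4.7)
The plan is to exhibit an explicit exact functor $F\colon\mathfrak{W}_{\mathbf{p}}\to C_{\mathtt{J}_{\mathbf{p}}}\text{-}\mathrm{mod}$ built from the ``canonical'' weight spaces of a module, and to show it sends the indecomposable projectives on each side bijectively to each other, matching Hom-spaces. The usual projective-generator argument for locally finite $\Bbbk$-linear categories then upgrades this to the desired equivalence.

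To set up, by Corollary~\ref{cor41} I may fix a representative $\mathbf{p}\in\Bbbk^{\mathtt{I}}_+$ of the block, whose simple objects are $L_{\mathtt{J}}:=L(\mathbf{p})^{\theta_{\mathtt{J}}}$ for the \emph{finite} subsets $\mathtt{J}\subset\mathtt{J}_{\mathbf{p}}$ (infinite $\mathtt{J}$ would force $\theta_{\mathtt{J}}(\mathbf{p})-\mathbf{p}$ to have infinitely many nonzero integer entries, moving the simple to a different block), with projective covers $P_{\mathtt{J}}:=(B(\mathbf{p})^{\theta_{\mathtt{J}}})^{\star}$ by Corollary~\ref{cor42}. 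Set $\mathbf{p}_{\mathtt{J}}:=\theta_{\mathtt{J}}(\mathbf{p})$, the unique top weight of $P_{\mathtt{J}}$; these finite subsets index the vertices of $Q_{\mathtt{J}_{\mathbf{p}}}$. I then define $F(M):=\bigoplus_{\mathtt{J}}M_{\mathbf{p}_{\mathtt{J}}}$ and decree that the arrow $\mathtt{J}\to\mathtt{J}\triangle\{i\}$ acts on the $\mathtt{J}$-component by $Y_i^{2p_i+1}$ if $i\notin\mathtt{J}$ and by $X_i^{2p_i+1}$ if $i\in\mathtt{J}$; both operators shift weights from $\mathbf{p}_{\mathtt{J}}$ to $\mathbf{p}_{\mathtt{J}\triangle\{i\}}$.

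The relations of $C_{\mathtt{J}_{\mathbf{p}}}$ are then verified as follows: the commuting squares follow at once from $X_iX_j=X_jX_i$, $Y_iY_j=Y_jY_i$, $X_iY_j=Y_jX_i$ for $i\neq j$; the loop relation $\alpha^2=0$ uses the Weyl-algebra identities $X_i^nY_i^n=t_i(t_i-1)\cdots(t_i-n+1)$ and $Y_i^nX_i^n=(t_i+1)(t_i+2)\cdots(t_i+n)$ (provable by induction from $[X_i,Y_i]=-\mathrm{Id}$); substituting $n=2p_i+1$ and the two possible loop eigenvalues $t_i\in\{p_i,-p_i-1\}$ produces a zero factor on the nose. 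Thus $F$ is well-defined, and since it is a direct sum of weight-space functors it is exact.

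Finally, formula~\eqref{eq37} (in its $\theta_{\mathtt{J}}$-twisted form) gives $\mathrm{Hom}_{\mathfrak{W}}(P_{\mathtt{J}},M)\cong M_{\mathbf{p}_{\mathtt{J}}}$; applied to $M=P_{\mathtt{J}'}$ and combined with the fact that the polynomial realization of $B(\mathbf{p})^{\theta_{\mathtt{J}'}}$ has all weight spaces one-dimensional, this shows each Hom between projectives is one-dimensional and that $F(P_{\mathtt{J}})$ matches the indecomposable projective at vertex $\mathtt{J}$ of $C_{\mathtt{J}_{\mathbf{p}}}$ on the nose. Since every object of $\mathfrak{W}_{\mathbf{p}}$ has a projective cover in $\{P_{\mathtt{J}}\}$ and $F$ is bijective on Hom between these, the standard Morita-style argument (or, when $\mathtt{J}_{\mathbf{p}}$ is infinite, a direct-limit extension mirroring the construction of $C_{\mathtt{J}_{\mathbf{p}}}$ in Section~\ref{s3.1}) promotes the restriction of $F$ to projectives into the claimed equivalence. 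The hardest part is precisely this middle verification: checking that the explicit monomial action by $X_i^{2p_i+1}$ and $Y_i^{2p_i+1}$ really realizes all the quiver relations simultaneously, and that $F(P_{\mathtt{J}})$ is identified with the correct indecomposable projective; once that is done, the extension from projectives to the whole category is routine.
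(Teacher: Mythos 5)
Your proposal is correct and takes essentially the same approach as the paper: your functor $F=\bigoplus_{\mathtt{J}}M_{\mathbf{p}_{\mathtt{J}}}\cong\bigoplus_{\mathtt{J}}\mathrm{Hom}_{\mathfrak{W}}(P_{\mathtt{J}},-)$ is exactly the Morita equivalence the paper gets from \cite[Theorem~4.2]{Ab}, and your arrow operators $Y_i^{2p_i+1}$, $X_i^{2p_i+1}$ are the paper's morphisms $\alpha_{U,i},\beta_{U,i}$ after normalizing the block representative into $\Bbbk^{\mathtt{I}}_+$ (the paper instead flips entries of $\mathbf{p}^{(U)}$ to $0$ or $-1$, giving exponents $-p_i$ and $p_i+1$). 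Both arguments hinge on the same two verifications, namely the quiver relations via $X_i^nY_i^n=t_i(t_i-1)\cdots(t_i-n+1)$ and $Y_i^nX_i^n=(t_i+1)\cdots(t_i+n)$, and the character/one-dimensionality comparison pinning down the projectives, so the difference is purely one of packaging.
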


\begin{proof}
Fix some representative in each isomorphism class of indecomposable projectives in $\mathfrak{W}_{\mathbf{p}}$
and let $\mathcal{X}$ be the full subcategory of $\mathfrak{W}_{\mathbf{p}}$ which these fixed representatives
generate. To prove our theorem we apply the classical Morita theory for rings with local units, see 
\cite[Theorem~4.2]{Ab}. The only nontrivial thing we have to check is that $\mathcal{X}$ is isomorphic to
$C_{\mathtt{J}_{\mathbf{p}}}$. 

For $\mathbf{m}\in\mathbf{p}+\mathbb{Z}_f^{\mathtt{I}}$ let $U(\mathbf{m})$ be the set of all $i\in \mathtt{J}_{\mathbf{p}}$ satisfying
either $p_i\in\{-1,-2,\dots\}$ while $m_i\in\{0,1,2,\dots\}$ or $p_i\in\{0,1,2,\dots\}$ while $m_i\in\{-1,-2,\dots\}$.
Then the map $P(\mathbf{m})\mapsto U(\mathbf{m})$ induces a bijection from objects of $\mathcal{X}$ to objects of
$C_{\mathtt{J}_{\mathbf{p}}}$. We realize the inverse of this map in the following way: for a finite 
$U\subset \mathtt{J}_{\mathbf{p}}$ define $\mathbf{p}^{(U)}$ as follows:
\begin{displaymath}
l^{(U)}_i:=
\begin{cases}
p_i, & p_i\not \in \mathbb{Z}\text{ or }i\not\in U;\\
0, & i\in U\text{ and }p_i<0;\\
-1, & i\in U\text{ and }p_i\geq 0.
\end{cases}
\end{displaymath}
For each $U$ fix some nonzero $v_U\in P(\mathbf{p}^{(U)})_{\mathbf{p}^{(U)}}$. 

Take now any finite $U\subset \mathtt{J}_{\mathbf{p}}$ and $i\in \mathtt{I}_{\mathbf{p}}\setminus U$.
If $p_i<0$, then define the homomorphism $\alpha_{U,i}:P(\mathbf{p}^{(U)})\to P(\mathbf{p}^{(U\cup\{i\})})$
by sending $v_U$ to $Y^{-p_i}v_{U\cup\{i\}}$, and also define the homomorphism 
$\beta_{U,i}:P(\mathbf{p}^{(U\cup\{i\})})\to P(\mathbf{p}^{(U)})$ by sending 
$v_{U\cup\{i\}}$ to $X^{-p_i}v_{U}$,
If $p_i>0$, then define the homomorphism $\alpha_{U,i}:P(\mathbf{p}^{(U)})\to P(\mathbf{p}^{(U\cup\{i\})})$
by sending $v_U$ to $X^{p_i+1}v_{U\cup\{i\}}$, and also define the homomorphism 
$\beta_{U,i}:P(\mathbf{p}^{(U\cup\{i\})})\to P(\mathbf{p}^{(U)})$ by sending 
$v_{U\cup\{i\}}$ to $Y^{p_i+1}v_{U}$. It is straightforward to check that these homomorphisms
satisfy the defining relations of $C_{\mathtt{J}_{\mathbf{p}}}$. Comparing the characters of 
indecomposable projective modules in $\mathfrak{W}_{\mathbf{p}}$ and over $C_{\mathtt{J}_{\mathbf{p}}}$
we conclude that these are all defining relations. The claim follows.
\end{proof}

\vspace{0.2cm}

\noindent
V.F.: Instituto de Matem\'atica e Estat\'istica, Universidade de S\~ao
Paulo,  S\~ao Paulo SP, Brasil; e-mail: {futorny\symbol{64}ime.usp.br}
\vspace{0.1cm}

\noindent
D.G.: Department of Mathematics, University of Texas at Arlington,  
Arlington, TX 76019, USA; e-mail: {grandim\symbol{64}uta.edu}
\vspace{0.1cm}

\noindent
V.M.: Department of Mathematics, Uppsala University, 
Box 480, SE-751 06, Uppsala, Sweden; e-mail: {mazor\symbol{64}math.uu.se}
\end{document}